\setlist[enumerate,1]{label=\arabic*., ref=\arabic*, leftmargin=*}
\newtheorem{thm}{Theorem}[section]
\newtheorem{cor}[thm]{Corollary}
\newtheorem{lem}[thm]{Lemma}
\newtheorem{df}[thm]{Definition}
\newtheorem{prop}[thm]{Proposition}
\newtheorem{rem}[thm]{Remark}
\newcommand{\N}{\ensuremath{\mathbb N}}
\newcommand{\Z}{\ensuremath{\mathbb Z}}
\newcommand{\R}{\ensuremath{\mathbb R}}
\newcommand{\G}{\Gamma}
\newcommand{\La}{\Lambda}
\DeclareMathOperator{\vspan}{span}
\DeclareMathOperator{\Spin}{Spin}
\DeclareMathOperator{\SO}{SO}
\DeclareMathOperator{\GO}{O}
\DeclareMathOperator{\GL}{GL}
\DeclareMathOperator{\SL}{SL}
\DeclareMathOperator{\diag}{diag}
\newcommand{\st}{\ensuremath{\;|\;}}
\newcommand{\ba}{\bar{\alpha}}
\newcommand{\bb}{\bar{\beta}}
\newcommand{\bg}{\bar{\gamma}}
\newcommand{\tf}{\widetilde{F}}
\newcommand{\seslr}[7]{#1 \longrightarrow #2 \stackrel{#3}{\longrightarrow} #4 \stackrel{#5}{\longrightarrow} #6 \longrightarrow #7}
\newcommand{\sesr}[6]{\seslr{#1}{#2}{}{#3}{#4}{#5}{#6}}
\title[]{Classification of spin structures on 4-dimensional almost-flat manifolds}
\author{R.~Lutowski}
\address{Institute of Mathematics, University of Gda\'{n}sk, Gda\'{n}sk, Poland}%
\email{rafal.lutowski@mat.ug.edu.pl}%
\author{N.~Petrosyan}
\address{Department of Mathematics, University of Southampton, Southampton, UK}%
\email{N.Petrosyan@soton.ac.uk}%
\author{A.~Szczepa\'{n}ski}
\address{Institute of Mathematics, University of Gda\'{n}sk, Gda\'{n}sk, Poland}%
\email{aszczepa@mat.ug.edu.pl}%
\thanks{2010 Mathematics Subject Classification: 53C27, 20H25}%
\thanks{\noindent The first and third authors were supported by the Polish National Science Center grant 2013/09/B/ST1/04125. The second author was supported by the EPSRC First Grant 	EP/N033787/1.}
\subjclass{}%
\keywords{almost-flat manifolds, Spin structures}%
\newsavebox\CBox
\def\textBF#1{\sbox\CBox{#1}\resizebox{\wd\CBox}{\ht\CBox}{\bfseries\boldmath{#1}}}
\newcommand{\eemrow}{\global\let\currentrowstyle\relax}
\newcolumntype{E}{>{\collectcell\currentrowstyle}r<{\endcollectcell}}
\newcolumntype{L}{>{\collectcell\currentrowstyle}l<{\endcollectcell}}
\newcolumntype{R}{>{\collectcell\currentrowstyle}r<{\endcollectcell}}
\newcommand{\rowstyle}[1]{\gdef\currentrowstyle{#1}}
\newcommand{\emrow}{\rowstyle{\textBF}}
\begin{document}
\maketitle

\begin{abstract}  Almost-flat manifolds were defined by Gromov as a natural generalisation of flat manifolds and as such share many of their properties. Similarly to flat manifolds, it turns out that the existence of a spin structure on an almost-flat manifold is determined by the canonical orthogonal representation of its fundamental group. Utilising this, we classify the spin structures on all four-dimensional almost-flat manifolds that are not flat.  Out of {127} orientable families, we show that there are exactly {15} that are non-spin, the rest are in fact parallelizable.
\end{abstract}

\section{Introduction}
For a connected and simply-connected nilpotent Lie group $N$ and a compact subgroup $C$ of ${\rm Aut}(N)$,  the semi-direct product $N\rtimes C$ acts on $N$ by 
$$(n,\varphi)\cdot m= n\varphi(m) \;\;\; \forall m, n\in N \mbox{ and } \forall \varphi \in C.$$
An {\it almost-crystallographic group} is a discrete subgroup  $\Gamma\subseteq N\rtimes C$ that acts  co-compactly on $N$.  If $\G$ is torsion-free, then it is said to be {\it almost-Bieberbach}. In this case, the quotient space $N/\G$ is a closed manifold called an {\it infra-nil} manifold (modelled on $N$). 

An {\it almost-flat} manifold is a closed manifold $M$ such that for any $\varepsilon >0$ there exists a Riemannian metric $g_{\varepsilon}$ on $M$ with $|K_{\varepsilon}|\mathrm{diam}(M, g_{\varepsilon})^2<\varepsilon$ where $K_{\varepsilon}$ is the sectional curvature and  $\mathrm{diam}(M, g_{\varepsilon})$ is the diameter of $(M, g_{\varepsilon})$. 

By the results of  Gromov \cite{Gr78} and Ruh \cite{Ru82}, the classes of almost-flat manifolds and infra-nil manifolds coincide. 
Such manifolds occur naturally in the study of  Riemannian manifolds with negative sectional curvature. For example, every complete non-compact finite volume manifold with pinched negative sectional curvature has finitely many boundary components which are all almost-flat manifolds (see \cite[\S1]{BK81}). Another important occurrence  of such manifolds is in the study of collapsing manifolds with uniformly bounded sectional curvature. By a theorem of Cheeger-Fukaya-Gromov,  if  a manifold  is  sufficiently collapsed relative to the size of its diameter, then it admits a local fibration structure whose fibers are almost-flat (see \cite{CFG92}).

It is well-known that all closed orientable manifolds of dimension at most three have a spin structure (see \cite[p.~35]{Ki89}, \cite[Exercise 12.B and VII, Theorem 2]{MS74}).  
In \cite{GPS16}, a complete list of non-spin $4$-dimensional orientable almost-flat manifolds with  cyclic $2$-group holonomy was given. In the current paper, we introduce an algorithm to determine the existence of a spin structure on an almost-flat manifold using the classifying representation of its fundamental group. We apply it to classify spin structures on all almost-flat manifolds in dimension four that are not flat. The case of $4$-dimensional flat manifolds has been done in \cite{PS10}. In doing so, we follow the classification of $4$-dimensional infra-nil manifolds given in \cite{D96}. 
There are {127} orientable families of four-dimensional infra-nil manifolds. Our computations show that there are exactly {15} families that cannot have a spin structure. In Corollary \ref{trivial}, we show that the remaining families of $4$-manifolds are in fact parallelizable, i.e.~have trivial tangent bundle.

We should also point out that we find one family (no.~80) of non-spin manifolds with holonomy group {$C_4$} which was inadvertently omitted in \cite{GPS16}.

\section{Preliminaries}

We will always assume that an almost-flat manifold comes equipped with a structure of an infra-nil manifold when discussing its topological properties. We denote by $\mathrm{O}(n)$ the real  orthogonal group of rank $n$. It is well-known that infra-nil manifolds are classified by their fundamental group which is almost-crystallographic.  A classical result of Auslander (see \cite{Au60}) asserts that every almost-crystallographic subgroup $\G \subseteq \mathrm{Aff}(N):=N\rtimes \mathrm{Aut}(N)$ fits into an extension
\[
\sesr{1}{\La}{\G}{\pi}{F}{1}
\]
where $\Lambda=\G\cap N$ is a uniform lattice in $N$ and $F$ is a finite subgroup of $C\subseteq \mathrm{Aut}(N)$  called the {\it holonomy group} of the corresponding infra-nil manifold $N/\G$. 

We recall a key result from \cite{GPS16} which states that the classifying map of the tangent bundle of an almost-flat manifold $M$ factors through the classifying space of the holonomy group $F$.  

\begin{prop}[{\cite[Prop.~2.1]{GPS16}}] \label{reduce}
Let $M$ be an orientable $n$-dimensional almost-flat manifold modelled on a connected and simply connected nilpotent Lie group $N$. Denote by $\G$ the fundamental group of $M$ and let
\[
\sesr{1}{\La}{\G}{\pi}{F}{1}
\]  
be the standard extension of $\G$. Then the classifying map $\tau:M\to {BSO}(n)$ of the tangent bundle of $M$ factors through $BF$ and is induced by a composite homomorphism $\rho:\G{\buildrel \pi\over \to} F {\buildrel \nu\over \to} \SO(n)$ where $\nu: F \hookrightarrow {\SO}(n)$ is the holonomy representation.
\end{prop}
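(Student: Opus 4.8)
The plan is to realise $TM$ as the vector bundle over $M\simeq B\G$ associated to the universal covering $N\to M$ through a linear representation of $\G$ that manifestly factors through $F$, and then read off the classifying map from this description. Write $M=N/\G$ and let $\mathfrak{n}$ be the Lie algebra of $N$.

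First I would trivialise $TN$ by left-invariant vector fields, giving $\Phi\colon TN\xrightarrow{\;\cong\;}N\times\mathfrak{n}$, $v\in T_mN\mapsto(m,(dL_m)_e^{-1}v)$. For $\gamma=(a,\varphi)\in\G\subseteq N\rtimes\mathrm{Aut}(N)$ the corresponding deck transformation of $N$ is $m\mapsto a\varphi(m)=L_a(\varphi(m))$; differentiating and using $\varphi\circ L_m=L_{\varphi(m)}\circ\varphi$, one checks that $\Phi$ carries its differential to the map $(m,X)\mapsto(\gamma\cdot m,\,d\varphi_e X)$ of $N\times\mathfrak{n}$. The key point is that the fibrewise action is $d\varphi_e$, with no contribution from the translational part $a$, so it depends on $\gamma$ only through its image $\varphi\in F$ under $\G\to\mathrm{Aut}(N)$, that is, only through $\pi(\gamma)$. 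Since $\G$ is almost-Bieberbach it acts freely and properly discontinuously on $N$ and on $TN$, and passing to quotients identifies $TM=TN/\G=(N\times\mathfrak{n})/\G$ with the vector bundle over $M=N/\G$ associated to the principal $\G$-bundle $N\to M$ by the homomorphism $\bar\nu\circ\pi\colon\G\to\GL(\mathfrak{n})$, where $\bar\nu\colon F\to\GL(\mathfrak{n})$, $f\mapsto df_e$.

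Next I would pass to classifying spaces. The exponential map identifies $N$ with $\mathfrak{n}\cong\R^n$, so $N$ is contractible; hence $M$ is a $K(\G,1)$ and $N\to M$ is a model for $E\G\to B\G$. By the standard correspondence between linear representations of $\G$ and vector bundles over $B\G$, the associated bundle $(N\times\mathfrak{n})/\G$ is classified by $B(\bar\nu\circ\pi)=B\bar\nu\circ B\pi\colon M\simeq B\G\to BF\to B\GL(n,\R)$, which exhibits the factorisation through $BF$. To land in $\SO(n)$, average an inner product on $\mathfrak{n}$ over the finite group $F$ to get an $F$-invariant one; then $\bar\nu$ has image in $\GO(\mathfrak{n})\cong\GO(n)$, and since $B\GO(n)\to B\GL(n,\R)$ is a homotopy equivalence this does not change the homotopy class in play. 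Orientability of $M$ means the deck transformations preserve orientation; with a left-invariant volume form $\omega$ on $N$ one has $L_a^*\omega=\omega$ and $\varphi^*\omega=\det(d\varphi_e)\,\omega$, so this is equivalent to $\det(df_e)>0$ for all $f\in F$, i.e.\ to $\bar\nu(F)\subseteq\SO(n)$. Setting $\nu:=\bar\nu\colon F\to\SO(n)$ --- injective because an automorphism of the simply connected nilpotent group $N$ is determined by its differential at $e$ via $\varphi\circ\exp=\exp\circ\,d\varphi_e$ --- we get that $\tau$ is homotopic to $B\nu\circ B\pi$, hence induced by $\rho=\nu\circ\pi\colon\G\xrightarrow{\pi}F\xrightarrow{\nu}\SO(n)$.

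The main obstacle is the equivariant differential computation in the first step: one must check carefully, in the left-invariant framing, that the differential of the affine deck transformation $(a,\varphi)$ acts on the fibre exactly by $d\varphi_e$ independently of $a$, so that the structure group truly reduces along $\pi$. Once this and the associated-bundle/classifying-space dictionary are set up, the orthogonalisation step and the orientation computation with the invariant volume form are routine.
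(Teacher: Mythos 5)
Your argument is correct, and it is essentially the argument behind this statement: the paper itself gives no proof here but imports the result from \cite{GPS16}, where the proof proceeds exactly as you do, by trivialising $TN$ with left-invariant vector fields so that a deck transformation $(a,\varphi)$ acts on the fibre $\mathfrak{n}$ only through $d\varphi_e$, identifying $TM$ with the flat bundle $N\times_{\G}\mathfrak{n}$ associated to $\nu\circ\pi$ over $M\simeq B\G$, and then averaging an inner product and using orientability to land in $\SO(n)$. No gaps to report.
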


We denote by $$\Lambda=\gamma_1(\Lambda)> \gamma_2(\Lambda)>\cdots >\gamma_{c+1}(\Lambda)=1,$$ 
the  lower central series of $\Lambda$, i.e. $\gamma_{i+1}(\Lambda)=[\Lambda, \gamma_i(\Lambda)]$ for $1\leq i\leq c$. By Lemma 1.2.6 of \cite{D96}, we have that the isolator $\sqrt[\Lambda]{\gamma_i(\Lambda)}=\Lambda \cap {\gamma_i(N)}$, where $\gamma_i(N)$, $1\leq i\leq c+1$, form the lower central series of $N$. By Lemmas 1.1.2-3 of \cite{D96}, the resulting {\it adapted lower central series}
\begin{equation}\label{eq}
\Lambda=\sqrt[\Lambda]{\gamma_1(\Lambda)}>  \sqrt[\Lambda]{\gamma_2(\Lambda)}>\cdots >\sqrt[\Lambda]{\gamma_{c+1}(\Lambda)}=1,
\end{equation}
has torsion-free factor groups 
$$H_i={\sqrt[\Lambda]{\gamma_i(\Lambda)}\over{\sqrt[\Lambda]{\gamma_{i+1}(\Lambda)}}},\;\;\; 1\leq i\leq c.$$ Thus, each $H_i\cong \Z^{m_i}$  for some positive integer $m_i$.
Conjugation in $\Gamma$ induces an action of the holonomy group $F$ on each factor group $H_i$. This gives  a faithful representation 
$$\theta:F\hookrightarrow \mbox{GL}(m_1, \Z)\times \cdots \times  \mbox{GL}(m_c, \Z)\hookrightarrow \mathrm{GL}(n, \Z), \;\;\; m_1+\cdots + m_c=n,$$ 
which we call the {\it integral holonomy representation}.
\begin{prop}[{\cite[Prop.~2.4]{GPS16}}]\label{equiv} The integral holonomy representation $\theta:F\hookrightarrow \mathrm{GL}(n, \Z)$ is $\R$-equivalent to $\nu:F\hookrightarrow \mathrm{O}(n)\subseteq \mathrm{GL}(n)$.
\end{prop}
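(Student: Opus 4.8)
The plan is to realise both $\theta$ and $\nu$ as real representations of the finite group $F$ built from the Lie algebra $\mathfrak n$ of $N$, and then to use that real representations of finite groups are semisimple. First recall that $\nu$ is obtained as follows: average an inner product on $\mathfrak n$ over the compact group $C$; each $\varphi\in C$ acts on $N$ as an automorphism, hence on $\mathfrak n$ by the differential $d\varphi_e$, which is orthogonal for this inner product, and via the left-invariant trivialisation $TN\cong N\times\mathfrak n$ this is exactly the representation whose restriction to $F\subseteq C$ classifies the tangent bundle, i.e.\ the holonomy representation $\nu\colon F\hookrightarrow\mathrm{O}(\mathfrak n)=\mathrm{O}(n)$. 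Thus it suffices to produce an $\R F$-module isomorphism $\theta\otimes_{\Z}\R\cong\mathfrak n$.

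The first step is to pass to the associated graded Lie algebra. Every Lie-algebra automorphism preserves each term $\gamma_i(\mathfrak n)$ of the lower central series of $\mathfrak n$, so $0=\gamma_{c+1}(\mathfrak n)\subseteq\gamma_c(\mathfrak n)\subseteq\dots\subseteq\gamma_1(\mathfrak n)=\mathfrak n$ is a filtration of $\mathfrak n$ by $F$-submodules, and $\gamma_i(\mathfrak n)/\gamma_{i+1}(\mathfrak n)$ is canonically the (abelian) Lie algebra of the vector group $N_i:=\gamma_i(N)/\gamma_{i+1}(N)$. By Lemma 1.2.6 of \cite{D96}, $\sqrt[\Lambda]{\gamma_i(\Lambda)}=\Lambda\cap\gamma_i(N)$, and by Lemmas 1.1.2--3 of \cite{D96} the image $H_i$ of $\Lambda\cap\gamma_i(N)$ in $N_i$ is a uniform lattice there; hence the inclusion $H_i\hookrightarrow N_i$ induces an isomorphism of real vector spaces $H_i\otimes_{\Z}\R\cong\gamma_i(\mathfrak n)/\gamma_{i+1}(\mathfrak n)$.

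The heart of the argument is checking that this identification is $F$-equivariant. Writing $\gamma=(n,\varphi)\in\G$ and $x\in\Lambda\cap\gamma_i(N)$ as $(x,1)$, a direct computation in $N\rtimes C$ gives $\gamma\,(x,1)\,\gamma^{-1}=(n\,\varphi(x)\,n^{-1},1)$. Since $n\in N=\gamma_1(N)$ and $\varphi(x)\in\gamma_i(N)$, the commutator $[n,\varphi(x)]=n\varphi(x)n^{-1}\varphi(x)^{-1}$ lies in $\gamma_{i+1}(N)$, so $n\varphi(x)n^{-1}$ and $\varphi(x)$ have the same image in $N_i$. Therefore the conjugation action of $F$ on $H_i$ — which is precisely the $i$-th diagonal block of $\theta$ — is induced by $\varphi\mapsto d\varphi_e$ on $\gamma_i(\mathfrak n)/\gamma_{i+1}(\mathfrak n)$, compatibly with the isomorphism above. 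Summing over $i$ gives $\theta\otimes_{\Z}\R\cong\bigoplus_{i=1}^{c}\gamma_i(\mathfrak n)/\gamma_{i+1}(\mathfrak n)$ as $\R F$-modules.

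Finally, since $F$ is finite and $\R$ has characteristic $0$, Maschke's theorem makes every $\R F$-module semisimple, so the $F$-stable filtration of $\mathfrak n$ splits $F$-equivariantly and $\mathfrak n\cong\bigoplus_{i=1}^{c}\gamma_i(\mathfrak n)/\gamma_{i+1}(\mathfrak n)$. Combining the last two isomorphisms yields $\theta\otimes_{\Z}\R\cong\mathfrak n\cong\nu$, which is the assertion. The main obstacle is the bookkeeping in the middle step: one must keep in mind that $\theta$ is by construction the direct sum of the conjugation actions on the \emph{abelian} graded quotients $H_i$, not an action on the nilpotent group $\Lambda$ itself, and that the inner-automorphism contribution of the $\G$-conjugation vanishes on each graded quotient because it raises the filtration degree by one — this is exactly what forces the integral and the linear holonomy to coincide after extending scalars to $\R$.
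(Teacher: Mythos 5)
Your proof is correct, and it follows essentially the same route as the proof of Proposition 2.4 in \cite{GPS16}, which this paper only cites: identify $H_i\otimes_{\Z}\R$ with the graded quotients $\gamma_i(\mathfrak n)/\gamma_{i+1}(\mathfrak n)$ of the Lie algebra of $N$, observe that the inner-automorphism part of the $\G$-conjugation vanishes on each graded piece so the identification is $F$-equivariant, and then split the $F$-stable filtration of $\mathfrak n$ by Maschke's theorem to compare with $\nu$. No gaps to report.
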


We may conclude that the almost-flat manifold $M$ is orientable if and only if the image of the representation $\theta:F\hookrightarrow   \mathrm{GL}(n, \Z)$ lies inside  $\mathrm{SL}(n, \Z)$.

\section{Spin representations }

The goal of this section is to present tools necessary to determine spin structures on almost-flat manifolds and discuss the main algorithm for the calculations. We start with the basic definitions which allows us to introduce some notation. We refer the reader to the original sources \cite{ABS64}, \cite{Ch54} and the textbook \cite{F00} for a comprehensive treatment of spin structures and its applications to the Dirac operator.

\begin{df}\rm
Let $n \in \N$. The \emph{Clifford algebra} $C_n$ is a~real unital associative algebra generated by the elements $e_1,\ldots,e_n$ satisfying the relations:
\[
\forall_{1 \leq i < j \leq n} \; e_i^2 = -1 \text{ and } e_ie_j = - e_je_i.
\]
\end{df}

\begin{rem}\rm
We may view $\R^n = \vspan\{e_1, \ldots, e_n\}$ as a~vector subspace of $C_n$, for $n \in \N$.
\end{rem}

Let $n \in \N$. In the Clifford algebra $C_n$ we have the following three involutions:
\begin{itemize}
\item[(i)] $^* \colon C_n \to C_n$, defined on the basis of (the vector space) $C_n$ by
\[ 
\forall_{1 \leq i_1 < i_2 < \ldots < i_k \leq n} \; ( e_{i_1}\ldots e_{i_k})^* = e_{i_k} \ldots e_{i_1};
\]
\item[(ii)] $' \colon C_n \to C_n$, defined on the generators of (the algebra) $C_n$ by 
\[
\forall_{1 \leq i \leq n} e_i' = -e_i.
\]
\item[(iii)] $\overline{ \phantom{a} } \colon C_n \to C_n$ is the composition of the previous involutions
\[
\forall_{a \in C_n} \; \overline{a} = (a')^*.
\]
\end{itemize}
The {\it spin group} is the following subgroup of the group of units in the Clifford algebra:
\[
\Spin(n) := \{ x \in C_n \st x' = x, x\overline{x}=1 \}.
\]
Moreover, there exists a covering map $\lambda_n \colon \Spin(n) \to \SO(n): x\mapsto  \lambda_n(x)$ defined by 
\begin{equation}
\label{eq:lambda}
 \forall_{v \in \R^n} \; \lambda_n(x)v = xv\overline{x},
\end{equation}
which is a homomorphism with kernel equal to $\{ \pm 1 \}$. For $n \geq 3$, the group $\Spin(n)$ is simply-connected (see for example \cite[Prop. 6.1, page 86]{Sz12},\cite[page 16]{F00}).

A {\it spin structure} on a smooth orientable manifold $M$ is an equivariant lift of its orthonormal frame bundle via the covering $\lambda_n$.  The existence of such a  lift is equivalent to the existence of a lift $\tilde{\tau}:M\to B\mathrm{Spin}(n)$ of the classifying map of the tangent bundle $\tau:M\to B\mathrm{SO}(n)$ 
such that $B_{\lambda_n}\circ\tilde{\tau}={\tau}$.  Equivalently, $M$ has a spin structure if and only if  the second Stiefel-Whitney class $w_2(T M)$ vanishes \cite[page 350]{BH59}.

Now, let $M=N/\G$ be an $n$-dimensional orientable almost-flat manifold modelled on the nilpotent group $N$ with the fundamental group $\G$, that fits into the standard extension
\begin{equation}
\label{eq:abgroup}
\sesr{1}{\La}{\G}{\pi}{F}{1}.
\end{equation}
Let $\rho \colon \G \to \SO(n)$ be the classifying representation of $\G$.  The following corollary of  Proposition \ref{reduce} allows us to classify the spin structures on $M$ via genuine group homomorphisms from its fundamental group to $\mathrm{Spin}(n)$ that lift the classifying representation. 

\begin{cor}[{\cite[Cor.~2.3]{GPS16}}] Let $M$ be an orientable almost-flat manifold of dimension $n$ with  
fundamental group $\Gamma$.
Then the set of spin structures on M is in one-to-one correspondence with the set of homomorphisms 
$\epsilon : \Gamma \to \mathrm{Spin}(n)$ such that $\lambda_n\circ\epsilon
= \rho$, i.e. the diagram
\begin{equation}
\label{eq:spin}
\begin{tikzcd}[ampersand replacement=\&]
 \& \Spin(n)\arrow{d}{\lambda_n} \\
\G \arrow{r}{\rho} \arrow[dashed]{ur}{\epsilon} \& \SO(n)
\end{tikzcd}
\end{equation}
commutes.
\end{cor}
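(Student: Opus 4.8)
The plan is to exploit the fact that an infra-nilmanifold $M = N/\Gamma$ is aspherical — $N$ is contractible and $\Gamma$ acts freely — so $M$ is a model for $B\Gamma$, and to reduce the lifting problem \eqref{eq:spin} to a purely group-theoretic one. By Proposition \ref{reduce} the classifying map $\tau\colon M\to B\SO(n)$ is, under $M\simeq B\Gamma$, homotopic to $B\rho$ with $\rho=\nu\circ\pi$, and it factors as $M\xrightarrow{B\pi}BF\xrightarrow{B\nu}B\SO(n)$. A spin structure on $M$ is, by the discussion preceding the corollary, the same as a lift of $\tau$ across $B\lambda_n\colon B\Spin(n)\to B\SO(n)$.

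First I would pull back the fibration $B\lambda_n$ along $B\nu$. Since $F$ is finite, the preimage $\hat F := \lambda_n^{-1}(\nu(F))$ is a central extension $1\to\Z/2\to\hat F\to F\to 1$, and the (homotopy) pullback $BF\times_{B\SO(n)}B\Spin(n)$ is equivalent, over $BF$, to $B\hat F$ with the map to $BF$ induced by $\hat F\to F$. Consequently lifts of $\tau$ across $B\lambda_n$ are the same as lifts of $B\pi\colon B\Gamma\to BF$ across $B\hat F\to BF$. Now everything in sight is a classifying space of a discrete group and $B\Gamma$ is aspherical, so such lifts correspond to group homomorphisms $\Gamma\to\hat F$ over $\pi$; composing with $\hat F\hookrightarrow\Spin(n)$ identifies these with the homomorphisms $\epsilon\colon\Gamma\to\Spin(n)$ satisfying $\lambda_n\circ\epsilon=\rho$, since the image of any such $\epsilon$ automatically lies in $\hat F$.

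For the bookkeeping I would check that the two sides match as torsors. If $\epsilon_1,\epsilon_2$ both lift $\rho$, then $\gamma\mapsto\epsilon_1(\gamma)\epsilon_2(\gamma)^{-1}$ takes values in $\ker\lambda_n=\{\pm1\}\cong\Z/2$, and as this subgroup is central in $\Spin(n)$ it is a homomorphism; conversely, twisting a lift by an element of $\mathrm{Hom}(\Gamma,\Z/2)$ produces another lift. Hence, when nonempty, the set of admissible $\epsilon$ is a torsor over $\mathrm{Hom}(\Gamma,\Z/2)=H^1(M;\Z/2)$ — the same group over which the set of spin structures is a torsor when nonempty. Nonemptiness is governed by the same obstruction on both sides: a lift $\epsilon$ exists iff the pullback along $\pi$ of the central extension $1\to\Z/2\to\hat F\to F\to1$ splits, iff its class in $H^2(\Gamma;\Z/2)=H^2(M;\Z/2)$ vanishes, and this class equals $\tau^{*}w_2=w_2(TM)$ because $B\hat F\to BF$ is the pullback of the $w_2$-classified fibration $B\Spin(n)\to B\SO(n)$. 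The geometric map $\epsilon\mapsto\bigl(N\times\Spin(n)\bigr)/\Gamma$, with $\Gamma$ acting through deck transformations on $N$ and through $\epsilon$ on $\Spin(n)$, realizes the correspondence, and one verifies it intertwines the two $H^1(M;\Z/2)$-actions.

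The main obstacle is the last step: passing from homotopy classes of lifts (equivalently, isomorphism classes of $\Spin(n)$-bundle lifts of the frame bundle) to \emph{honest} homomorphisms $\epsilon$ requires care, since $[B\Gamma,B\hat F]$ a priori records homomorphisms only up to conjugacy. One resolves this not by a naive homotopy identification but through the rigid torsor structure above: both sets carry compatible simply transitive $H^1(M;\Z/2)$-actions and are simultaneously (non)empty, which forces the geometric map to be a bijection. Alternatively, one can avoid classifying spaces altogether by the direct argument sketched in the previous paragraph — write $TM = N\times_\Gamma\R^n$ via $\rho$, so that a lift of its orthonormal frame bundle to a principal $\Spin(n)$-bundle is precisely a $\Gamma$-equivariant lift of the $\rho$-action, i.e.\ a homomorphism $\epsilon\colon\Gamma\to\Spin(n)$ with $\lambda_n\circ\epsilon=\rho$.
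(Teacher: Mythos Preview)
The paper does not give its own proof of this corollary; it is quoted verbatim from \cite[Cor.~2.3]{GPS16} and used as a black box. So there is nothing in the present paper to compare your argument against.

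That said, your proposal is correct, and the direct bundle-theoretic argument you sketch in the final paragraph is in fact the standard one (and essentially what appears in \cite{GPS16}): since $N$ is contractible, the oriented orthonormal frame bundle of $M$ is $N\times_{\Gamma}\SO(n)$ with $\Gamma$ acting through $\rho$; any $\Spin(n)$-lift of this bundle pulls back to the trivial bundle $N\times\Spin(n)$ over $N$, and the induced $\Gamma$-action on it, being a lift of the constant twist by $\rho(\gamma)$, must itself be by a constant element of the two-point fiber $\lambda_n^{-1}(\rho(\gamma))$ (as $N$ is connected). This gives exactly a homomorphism $\epsilon$ with $\lambda_n\circ\epsilon=\rho$, and the ambiguity in trivialization is only the central sign, which does not change $\epsilon$. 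Your torsor bookkeeping over $H^1(M;\Z/2)$ with common obstruction $w_2(TM)$ is also valid and matches the classical picture.

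One small comment on the obstacle you flag: the conjugacy ambiguity in $[B\Gamma,B\hat F]$ is a red herring here, because you are not classifying free homotopy classes of maps into $B\hat F$ but rather \emph{lifts of a fixed map} $B\pi$ along the fibration $B\hat F\to BF$. Vertical homotopies of lifts correspond precisely to conjugation by elements of $\ker(\hat F\to F)=\{\pm1\}$, which is central, so no identification occurs. Your torsor argument and your direct argument both circumvent this anyway, but it is worth noting that the classifying-space route does not actually have the gap you worried about.
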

\begin{rem}\rm In \cite{P00}, Pf\"{a}ffle used the analogous result (Proposition 3.2) to classify the Dirac spectra for all three dimensional orientable flat manifolds.
\end{rem}

\subsection{The algorithm for computing spin structures}  For an almost-Bieberbach group $\G$, there always exists a finite presentation, say $\G = \langle S \;|\; R \rangle$ with the set of generators $S$ and the set of relations $R$. A map $\epsilon'\colon S \to \Spin(n)$ can be extended to a homomorphism $\epsilon \colon \G \to \Spin(n)$ if and only if it preserves the relations of $\G$:
\[
\forall_{r_1,\ldots,r_l \in S} \forall_{\alpha_1,\ldots,\alpha_l \in \Z} \;\; r_1^{\alpha_1} \cdot \ldots \cdot r_l^{\alpha_l} \in R \Rightarrow \epsilon'(r_1)^{\alpha_1} \cdot \ldots \cdot \epsilon'(r_l)^{\alpha_l} = 1.
\]
Moreover, since $\ker \lambda_n = \{\pm 1\}$, in order to obtain commutativity of the diagram \eqref{eq:spin}, we must have
\[
\forall_{\gamma \in \G} \forall_{x \in \Spin(n)}\;\;  \rho(\gamma) = \lambda_n(x) \Rightarrow \epsilon(\gamma) \in \{\pm x\}.
\]

\noindent Hence, it follows that we can construct a lifting homomorphism $\epsilon$ if and only if for every generator $s \in S$, there exists an element $x_s \in \Spin(n)$ such that $\rho(s) = \lambda_n(x_s)$ and there is a combinations of signed elements $x_s \in \Spin(n)$, $s \in S$, that preserve the given relations of $\G$.

Note that in order to find a preimage in $\lambda_n^{-1}(g)$ of an element $g \in \SO(n)$ it is enough to use the definition of the group $\Spin(n)$ and the formula \eqref{eq:lambda} which gives us a system of $n$ linear equations with $2^n$ variables.
In general, this approach is not very efficient. The following lemmas allow us to simplify this problem in our context.

\begin{lem}[{\cite[Lemma 2.6]{GPS16}}]
\label{sylow} 
Let $M$ be an orientable almost-flat manifold  with the fundamental group $\G$ satisfying \eqref{eq:abgroup}.  Let  $S$ be  a 2-Sylow subgroup of $F$.  Then $M$ has a spin structure if and only if $N/{\pi^{-1}(S)}$ has a spin structure.
\end{lem}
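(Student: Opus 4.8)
The plan is to reduce the question of a spin structure on $M = N/\Gamma$ to one on the finite-index cover $N/\pi^{-1}(S)$ by comparing the obstruction classes, which live in $H^2$ with $\Z/2$ coefficients. Recall that $M$ admits a spin structure if and only if $w_2(TM) \in H^2(M;\Z/2)$ vanishes, and by Proposition \ref{reduce} the classifying map of $TM$ factors through $BF$; hence $w_2(TM) = \rho^*(w)$ for a fixed class $w \in H^2(BF;\Z/2) = H^2(F;\Z/2)$ (namely the pullback of the universal $w_2$ under $\nu \colon F \hookrightarrow \SO(n)$, precomposed with $B\pi$), where here I abuse notation and write $\rho^*$ for $(B\pi)^*$ composed with the map induced by $BF \to BSO(n)$. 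Let $H = \pi^{-1}(S) \le \Gamma$; this is precisely the fundamental group of the cover $N/H$, its standard extension is $\ses{1}{\Lambda}{H}{S}{1}$, and its holonomy representation is the restriction $\nu|_S$. So $N/H$ has a spin structure if and only if $\operatorname{res}^F_S(w) \in H^2(S;\Z/2)$ pulls back to zero in $H^2(H;\Z/2)$.

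The key step is then a transfer argument. One direction is immediate: if $M$ is spin, then so is any finite cover, in particular $N/H$ — pulling back a spin structure along the covering map works, or at the level of classes, naturality of $w_2$ gives that the restriction of a vanishing class vanishes. For the converse, the standard fact is that the restriction map $\operatorname{res}^F_S \colon H^2(F;\Z/2) \to H^2(S;\Z/2)$ from a group to its $2$-Sylow subgroup is \emph{injective}: this follows from the composite $\operatorname{tr}^F_S \circ \operatorname{res}^F_S = [F:S]\cdot\mathrm{id}$ together with the fact that $[F:S]$ is odd, hence invertible mod $2$. So if $N/H$ is spin, then $\rho^*|_H(w) = 0$, but I need to conclude $\rho^*(w) = 0$ on $\Gamma$, not merely that $\operatorname{res}^F_S(w) = 0$ on $S$. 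Here one uses that the relevant obstruction is detected already at the level of $F$ versus $S$: more precisely, the obstruction to a spin structure on $M$ is the image of $w$ under $H^2(F;\Z/2) \to H^2(\Gamma;\Z/2)$, and one checks that this is zero if and only if its further restriction to $H^2(H;\Z/2)$ is zero, because the composite $H^2(F;\Z/2) \to H^2(\Gamma;\Z/2) \to H^2(H;\Z/2)$ factors as $H^2(F;\Z/2) \xrightarrow{\operatorname{res}} H^2(S;\Z/2) \to H^2(H;\Z/2)$ — and a transfer argument on the pair $(\Gamma, H)$ of index $[\Gamma:H] = [F:S]$ (odd) shows the map $H^2(\Gamma;\Z/2)\to H^2(H;\Z/2)$ is injective on the image of $H^2(F;\Z/2)$, again via $\operatorname{tr}\circ\operatorname{res} = [\Gamma:H]$.

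The main obstacle is bookkeeping the functoriality carefully so that the transfer is applied to the correct pair of groups. The cleanest route is probably to work entirely on $\Gamma$ and $H$: note $[\Gamma : H] = [F:S]$ is odd, so the transfer $\operatorname{tr}\colon H^2(H;\Z/2) \to H^2(\Gamma;\Z/2)$ satisfies $\operatorname{tr}\circ\operatorname{res}^\Gamma_H = [\Gamma:H] = 1$ in $\Z/2$, whence $\operatorname{res}^\Gamma_H$ is injective on all of $H^2(\Gamma;\Z/2)$. Since $w_2(TM) \in H^2(\Gamma;\Z/2) \cong H^2(M;\Z/2)$ and its restriction to $H^2(H;\Z/2) \cong H^2(N/H;\Z/2)$ is exactly $w_2(T(N/H))$ by naturality of Stiefel–Whitney classes under the covering $N/H \to M$ (the cover is a local diffeomorphism, so the tangent bundle pulls back), we get $w_2(TM) = 0 \iff w_2(T(N/H)) = 0$. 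That is precisely the claim, and I would present it in roughly this order: (1) identify $N/H$ as the cover corresponding to $\pi^{-1}(S)$ and record its extension and holonomy; (2) recall $w_2$ as the obstruction and its naturality under finite covers; (3) invoke $\operatorname{tr}\circ\operatorname{res} = [\Gamma:H]$ with $[\Gamma:H]$ odd to get injectivity of restriction mod $2$; (4) conclude.
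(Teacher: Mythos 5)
Your argument is correct, and it is essentially the standard proof: the paper itself only cites \cite[Lemma 2.6]{GPS16}, whose argument is exactly the one you settle on in your final paragraph --- the cover $N/\pi^{-1}(S)\to M$ has odd degree $[F:S]$, the tangent bundle pulls back under this local diffeomorphism so $w_2$ restricts to $w_2$, and the transfer relation $\operatorname{tr}\circ\operatorname{res}=[\Gamma:\pi^{-1}(S)]\cdot\mathrm{id}$ with odd index makes restriction injective on mod $2$ cohomology. The detour in your middle paragraph through $H^2(F;\Z/2)$ and the Sylow restriction is unnecessary (though harmless), since the clean $\Gamma$-versus-$\pi^{-1}(S)$ transfer already closes the argument.
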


\begin{lem}[{\cite[page 282]{LP15}}]
Let $\theta \colon F \to \GL(n,\Z)$ be a representation of a 2-group $F$. Then $\nu$ is $\R$-equivalent to a representation $\theta' \colon F \to \GO(n,\Z) = O(n) \cap \GL(n,\Z)$.
\end{lem}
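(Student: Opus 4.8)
Since $\GO(n,\Z)$ is precisely the group of signed permutation matrices --- an orthogonal integer matrix has columns which are integer vectors of norm $1$, hence of the form $\pm e_i$ --- the assertion is that the integral representation $\theta$ becomes, after conjugation by a real matrix, a representation by signed permutation matrices. The first move is a reduction to the irreducible case: regarding $\Q^n$ as a $\Q F$-module through $\theta$, Maschke's theorem splits it as a direct sum of simple $\Q F$-modules, and a block-diagonal direct sum of signed permutation representations is again a signed permutation representation. So it suffices to show that every simple $\Q F$-module $V$ is $\R$-equivalent (in fact $\Q$-equivalent) to a representation into $\GO(\dim V,\Z)$.

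The crux is that every simple $\Q F$-module $V$ is induced from a one-dimensional rational representation of a subgroup; since a $\Q$-valued character of a finite group takes values in $\{\pm1\}$, this says $V\cong\Q F\otimes_{\Q H}\Q_\chi$ for some $H\le F$ and some homomorphism $\chi\colon H\to\{\pm1\}$. Granting this, inside $V$ sits the $F$-stable lattice $\Z F\otimes_{\Z H}\Z_\chi$, whose $\Z$-basis given by the vectors $g\otimes1$, with $g$ running over a transversal of $H$ in $F$, is permuted by $F$ up to the signs $\chi(\cdot)$; in this basis the action of $F$ on $V$ lands in $\GO(\dim V,\Z)$. Assembling the simple summands then proves the lemma (indeed over $\Q$, which is stronger than asserted).

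To establish the crux I would induct on $|F|$. If $\theta$ is not faithful on $V$, pass to $\bar F=F/\ker(\theta|_V)$, apply the inductive hypothesis there, and inflate back --- using transitivity of induction and the compatibility of induction with inflation --- to exhibit $V$ as induced from a sign character of the preimage of the relevant subgroup of $\bar F$. If $V$ is faithful, choose a central $Z=\langle z\rangle\cong\Z/2$; since $z$ is central its eigenspaces on $V$ are submodules, so simplicity and faithfulness force $z$ to act by $-1$. When $F$ is abelian, a faithful simple $\Q F$-module forces $F$ to be cyclic, say $F\cong\Z/2^k$; then $z$ acts by $-1$ on $\Q F\otimes_{\Q Z}\Q_-$, so a generator of $F$ has minimal polynomial dividing $x^{2^{k-1}}+1=\Phi_{2^k}(x)$, which is irreducible of degree $2^{k-1}=\dim V$, whence $V\cong\Q(\zeta_{2^k})\cong\Q F\otimes_{\Q Z}\Q_-$. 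When $F$ is non-abelian, I would run Clifford theory relative to a maximal abelian normal subgroup $A$ (self-centralising, as $F$ is nilpotent): if a simple constituent of $V|_A$ has proper inertia group, then $V$ is induced from that inertia group and the induction closes; otherwise $V|_A$ is isotypic, which forces $A$ cyclic, hence $F/A\hookrightarrow\mathrm{Aut}(A)$ abelian, and $V$ becomes a simple module over the crossed-product algebra $L\ast(F/A)$ with $L=\Q(\zeta_{|A|})$ --- a central simple algebra over $L^{F/A}$ --- which one analyses in the cyclic, dihedral, semidihedral and generalised quaternion cases, in each writing down the faithful simple rational module explicitly as induced from a sign character of a subgroup of order $2$ or $4$.

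The main obstacle is exactly this last step: proving that a faithful simple rational representation of a non-abelian $2$-group is monomial with $\pm1$-valued inducing character. The irritating point is the Schur-index-$2$ behaviour of generalised quaternion groups, where the relevant crossed product is a rational quaternion division algebra rather than a matrix algebra, so the induced structure has to be produced by hand instead of being read off a normal basis. The remainder --- the Maschke reduction, the coset-lattice bookkeeping, the inflation step, and the abelian case --- is routine.
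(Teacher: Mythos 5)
Your intended route --- split $\Q^n$ by Maschke into simple $\Q F$-modules, show that every simple module of a $2$-group is induced from a $\{\pm 1\}$-valued linear character of a subgroup, and then read off signed permutation matrices on the coset basis of the induced lattice --- is the natural one, and if completed it even yields $\Q$-equivalence, which is stronger than the stated $\R$-equivalence. (Note that the paper itself gives no argument for this lemma; it is quoted from \cite{LP15}, page 282, so there is no internal proof to compare with.) Your preliminary steps are all fine: $\GO(n,\Z)$ really is the group of signed permutation matrices, the reduction to simples, the inflation step, the eigenspace argument for a central involution, and the cyclic (abelian) case are correct.

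The gap is exactly where you flag it, and it is genuine in two respects. First, your branching is on one chosen maximal abelian normal subgroup $A$: from homogeneity of $V|_A$ you only learn that this particular $A$ is cyclic and self-centralising, and that does \emph{not} force $F$ to be cyclic, dihedral, semidihedral or generalised quaternion. Concretely, in the modular group $M_{16}=\langle a,b \mid a^8=b^2=1,\ bab^{-1}=a^5\rangle$ the subgroup $A=\langle a\rangle\cong C_8$ is maximal abelian, normal and self-centralising, and the faithful simple $\Q M_{16}$-module (of dimension $4$) restricts to $A$ homogeneously, as $\Q(\zeta_8)$; the same happens for $C_8\rtimes\mathrm{Aut}(C_8)$ of order $32$, where moreover $[F:A]=4$. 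So the homogeneous branch contains groups outside your four families and your case analysis does not close the induction. The standard repair is to branch over \emph{all} abelian normal subgroups: if some abelian normal $B$ has inhomogeneous $V|_B$, induce from the stabiliser of a homogeneous component; otherwise every abelian normal subgroup acts homogeneously, hence by faithfulness is cyclic, and the classical theorem on $2$-groups with no normal Klein four subgroup forces $F$ to be cyclic, dihedral, semidihedral or generalised quaternion. Second, even in those four families the explicit monomial structure is asserted rather than proved; it does hold and amounts to a character comparison (characters determine $\Q F$-modules in characteristic $0$): for $C_{2^k}$ and for generalised quaternion groups the faithful rational irreducible is induced from the sign character of the central $C_2$ --- which also disposes of the Schur-index-$2$ worry, since the induced character already equals that of the irreducible --- while for dihedral and semidihedral groups it is induced from a sign character of a Klein four subgroup containing the central involution. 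With these two repairs your argument is complete; as written, the crux step is not.
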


To determine whether $M$ has a spin structure, using the above two lemmas and Proposition \ref{equiv}, without loss of generality, we can assume that $F$ is a~2-group and that the holonomy representation is of the form
\[
\theta \colon F \to \SO(n,\Z) = \SO(n) \cap \SL(n,\Z).
\]
The group $\SO(n,\Z)$ is generated by the matrices of the form
\[
P'_{(p\;q)} = \diag(\underbrace{1,\ldots,1}_{p-1},-1,1,\ldots,1) \cdot P_{(p\;q)}
\]
where $p < q$ and $P_{(p\;q)}$ is a matrix of the inversion $(p\;q)$.
An easy calculation shows that
\[
\lambda_n\left( \pm\frac{1+e_p e_q}{\sqrt{2}} \right) = P'_{(p\;q)}
\]
for $p<q$ and as a consequence
\[
\lambda_n(\pm e_{n_1} \cdots e_{n_l}) = D'
\]
where $D' \in \SO(n,\Z)$ is the diagonal matrix with $-1$ in exactly the entries $n_1,\ldots,n_l$ (see \cite[Lemma 7]{LP15}).

Lastly, we can simplify our calculation of spin structures on $M$ as follows. Let $\La^2$ be the normal closure of the group generated by the squares of the generators of the nilpotent lattice $\La$. If the lifting homomorphism $\epsilon \colon \G \to \Spin(n)$ exists, then $\La^2 \subseteq \ker\epsilon$. Therefore, in the following commutative diagram,  the existence of the lifting $\epsilon$ is equivalent to the existence of the lifting $\epsilon_2$:
\[
\begin{tikzcd}[ampersand replacement=\&]
\G/\La^2 \arrow[dashrightarrow]{r}{\epsilon_2} \& \Spin(n)\arrow{d}{\lambda_n} \\
\G \arrow{u}{\mu} \arrow{r}{\rho} \arrow[dashrightarrow]{ur}{\epsilon} \& \SO(n)
\end{tikzcd}
\]
where $\mu\colon \G \to \G/\La^2$ denotes the natural quotient homomorphism. We have proved:
\begin{prop}\label{mod2}
Let $M$ be an orientable almost-flat manifold of dimension $n$ with fundamental group $\Gamma$ satisfying  \eqref{eq:abgroup}.
Then the set of spin structures on M is in one-to-one correspondence with the set of homomorphisms 
$\epsilon_2 : \Gamma/\La^2 \to \Spin(n)$ such that $\lambda_n\circ\epsilon_2\circ\mu
= \rho$.
\end{prop}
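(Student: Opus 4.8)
The plan is to deduce Proposition~\ref{mod2} from the earlier corollary \cite[Cor.~2.3]{GPS16}, which already identifies the set of spin structures on $M$ with the set of homomorphisms $\epsilon\colon\G\to\Spin(n)$ lifting $\rho$ along $\lambda_n$. It therefore suffices to show that precomposition with $\mu$ sets up a bijection between such lifts $\epsilon$ and the homomorphisms $\epsilon_2\colon\G/\La^2\to\Spin(n)$ satisfying $\lambda_n\circ\epsilon_2\circ\mu=\rho$. Everything then reduces to verifying one containment, namely $\La^2\subseteq\ker\epsilon$ for every such lift $\epsilon$.

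First I would record that the restriction of $\rho$ to $\La$ is trivial: by Proposition~\ref{reduce} we have $\rho=\nu\circ\pi$, and $\La=\ker\pi$ by the standard extension, so $\rho(\La)=\{1\}$. Hence for any lift $\epsilon$ of $\rho$ the image $\epsilon(\La)$ lies in $\ker\lambda_n=\{\pm1\}$. Since $\{\pm1\}$ is abelian of exponent two, $\epsilon$ annihilates every square $\lambda^2$ with $\lambda\in\La$ (indeed $\epsilon(\lambda)^2=1$), and, being a homomorphism into an abelian group, it also annihilates the normal closure in $\G$ of the set of these squares; because $\La$ is normal in $\G$, that normal closure is exactly $\La^2\subseteq\La$. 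This is the one substantive point of the argument and the step I would treat most carefully, although it is still quite short; everything else is formal.

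Next I would invoke the universal property of the quotient $\mu\colon\G\to\G/\La^2$. Having shown $\La^2\subseteq\ker\epsilon$, each lift $\epsilon$ factors uniquely as $\epsilon=\epsilon_2\circ\mu$ for a homomorphism $\epsilon_2\colon\G/\La^2\to\Spin(n)$, and conversely every such $\epsilon_2$ produces a homomorphism $\epsilon:=\epsilon_2\circ\mu\colon\G\to\Spin(n)$. Because $\mu$ is surjective, the identity $\lambda_n\circ\epsilon=\rho$ holds if and only if $\lambda_n\circ\epsilon_2\circ\mu=\rho$, so the assignments $\epsilon\mapsto\epsilon_2$ and $\epsilon_2\mapsto\epsilon_2\circ\mu$ are mutually inverse bijections between the relevant sets of homomorphisms. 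Composing this bijection with the one from \cite[Cor.~2.3]{GPS16} yields the asserted one-to-one correspondence between spin structures on $M$ and homomorphisms $\epsilon_2\colon\G/\La^2\to\Spin(n)$ with $\lambda_n\circ\epsilon_2\circ\mu=\rho$. The only remaining bookkeeping point is the well-definedness of $\G/\La^2$, i.e.\ that $\La^2$ is a genuine normal subgroup of $\G$, which is immediate from its definition as a normal closure.
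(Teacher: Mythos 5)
Your proposal is correct and is essentially the paper's own argument: the key point, that $\rho(\La)=1$ forces $\epsilon(\La)\subseteq\ker\lambda_n=\{\pm1\}$ and hence $\La^2\subseteq\ker\epsilon$, after which lifts factor uniquely through $\mu$ and one composes with \cite[Cor.~2.3]{GPS16}, is exactly how the paper proves Proposition \ref{mod2}. The only quibbles are cosmetic: the containment of the normal closure in $\ker\epsilon$ follows from normality of the kernel rather than from any abelianness, and you need only $\La^2\subseteq$ (normal closure of all squares of elements of $\La$), not the claimed equality, since the paper's $\La^2$ is generated by squares of generators.
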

\noindent Reducing  \eqref{eq:abgroup} mod $\La^2$, we have
\begin{equation}
\label{eq:abgroupmod2}
\sesr{1}{\La/\La^2}{\G/\La^2}{\pi_2}{F}{1}
\end{equation}
and we deduce:
\begin{cor}
Let $M$ be as above and $\rho_2=\nu\circ \pi_2: \Gamma/\La^2 \to \SO(n)$. Then the set of spin structures on M is in one-to-one correspondence with the set of homomorphisms 
$\epsilon_2 : \Gamma/\La^2 \to \Spin(n)$ such that $\lambda_n\circ\epsilon_2 = \rho_2$.
\end{cor}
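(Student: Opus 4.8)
The plan is to reduce the statement directly to Proposition \ref{mod2} by checking that $\rho$ and $\rho_2$ are compatible with the quotient map $\mu \colon \G \to \G/\La^2$. First I would recall from Proposition \ref{reduce} that the classifying representation factors as $\rho = \nu \circ \pi$, where $\pi \colon \G \to F$ is the projection in the standard extension \eqref{eq:abgroup}. Since $\La^2$ is contained in $\La = \ker \pi$, the homomorphism $\pi$ descends through $\mu$ to the map $\pi_2 \colon \G/\La^2 \to F$ appearing in the reduced extension \eqref{eq:abgroupmod2}; that is, $\pi = \pi_2 \circ \mu$. Composing with $\nu$ then gives $\rho = \nu \circ \pi_2 \circ \mu = \rho_2 \circ \mu$.

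Next I would invoke Proposition \ref{mod2}, which puts the set of spin structures on $M$ in bijection with the set of homomorphisms $\epsilon_2 \colon \G/\La^2 \to \Spin(n)$ satisfying $\lambda_n \circ \epsilon_2 \circ \mu = \rho$. Substituting the identity $\rho = \rho_2 \circ \mu$ obtained in the previous step, this condition becomes $\lambda_n \circ \epsilon_2 \circ \mu = \rho_2 \circ \mu$. Finally, since $\mu$ is surjective, a homomorphism out of $\G/\La^2$ is determined by its precomposition with $\mu$, so the displayed equality is equivalent to $\lambda_n \circ \epsilon_2 = \rho_2$. Composing these two bijections yields the claimed correspondence.

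There is essentially no obstacle here: all the content already resides in Propositions \ref{reduce} and \ref{mod2}, and the only thing to verify is the bookkeeping that $\rho$ factors through $\mu$ as $\rho_2$. The single point requiring a little care is that the two extensions \eqref{eq:abgroup} and \eqref{eq:abgroupmod2} use compatible projections, i.e.\ that $\pi_2$ is genuinely the map induced by $\pi$ on passing to the quotient by $\La^2$; but this is immediate from the construction of \eqref{eq:abgroupmod2} as the reduction of \eqref{eq:abgroup} modulo $\La^2$, so the corollary follows at once.
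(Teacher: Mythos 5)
Your argument is correct and matches the paper's (implicit) reasoning: the corollary is deduced from Proposition \ref{mod2} exactly by noting that $\rho$ factors through $F$, hence through $\G/\La^2$ as $\rho = \rho_2\circ\mu$, and using surjectivity of $\mu$ to drop the precomposition. Nothing further is needed.
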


\noindent Since the group  $\G/\La^2$ is finite, the above criteria is particularly useful when working with a computer.

\section{Preimages in Spin(4)}

Let $\G$ be a 4-dimensional almost-Bieberbach group with the standard extension \eqref{eq:abgroup}. In this section, we will calculate the subgroup $\lambda^{-1}(F)$ in $\Spin(4)$ where we denote $\lambda=\lambda_4$ to shorten the notation. 
For this, we will use the classification of almost-Bieberbach groups of dimension four with 2  and 3-step nilpotent Fitting subgroup given in Sections 7.2 and 7.3 of \cite{DE02}, respectively (see also \cite[page 197]{DE02} for an addendum to this list). Note that the case of four-dimensional Bieberbach groups, i.e.~almost-Bieberbach groups with abelian Fitting subgroup, was considered in \cite{PS10}.
We combine and enumerate them in families according to the isomorphism type of the holonomy group $F$.
In the classification given in \cite{D96}, there are always at most three generators of $\G$ which lie outside $\La$. We denoted them by $\alpha, \beta$ and $\gamma$.  
We will denote the induced generators of $F$ by
\[
\ba = \pi(\alpha), \bb = \pi(\beta) \text{ and } \bg = \pi(\gamma).
\]
For each isomorphism type of $F$ we give the following data:
\begin{enumerate}
\item[(i)] The presentation of $F$.
\item[(ii)] The family number with the given holonomy group according to \cite{D96}. (If the family is modelled on a two-step nilpotent group, only a number is given. For three-step nilpotent groups, the number is prefixed by the letter `B'. This agrees with the notation given by the GAP package Aclib \cite{aclib}.)
\item[(iii)] The character of the holonomy representation. (The character tables are given in the appendix.)
\item[(iv)] The  integral holonomy representation given by the images of the generators either in the form $\theta \colon F \to \SO(n,\Z)$, if such a representation exists, or the form $\theta \colon F \to \SL(n,\Z)$, otherwise.
\item[(v)] The structure of the group $\tf = \lambda^{-1}(F)$ and the homomorphism $\lambda_{|\tf}$. (We will make use of the central extension 
\[
\sesr{1}{\{ \pm 1 \}}{\tf}{\lambda_{|\tf}}{F}{1}
\]
in finding a presentation of $\tf$ in $\Spin(4)$.)
\end{enumerate}

\subsection{Preimages of the holonomy and their representations in $\bf \Spin(4)$.}

\begin{enumerate}
\setlength\itemsep{.5\baselineskip}
\setlength{\parindent}{0pt}
\item $F = 1$. 

Families: 1, B1. 

The character: $4\chi_1$.

$\tf = \{ \pm 1 \} \cong C_2$, $\lambda(\pm 1) = \theta(1)$.

\item $F = \langle \ba \;|\; \ba^2=1 \rangle \cong C_2$.

Families: 3, 4, 5, 7b, 9b, B3, B3b, B3c, B4, B5, B5b. 

In each family, the character is equal to 
\[
2\chi_1 + 2\chi_2.
\]

If we take
\[
\theta(\ba) = \diag(1,1,-1,-1),
\]
then $\tf = \langle e_3e_4 \rangle \cong C_4$ and $\lambda(e_3e_4) = \theta(\ba)$.

\item $F = \langle \ba, \bb \;|\; \ba^2=\bb^2=1, \ba\bb=\bb\ba \rangle \cong C_2 \times C_2$. 

Families: 27, 29b, 30, 32, 33b, 34, 37, 41, 43, 45. 

In each family, the character is equal to 
\[
\chi_1 + \chi_2 + \chi_3 + \chi_4.
\]
If we take
\[
\theta(\ba) = \diag(-1,-1,1,1),\; \theta(\bb) = \diag(1,-1,-1,1),
\]
then
\[
\lambda(\pm e_1e_2) = \theta(\ba),\; \lambda(\pm e_2e_3) = \theta(\bb)
\]
and $\tf \cong Q_8$. Identifying $Q_8 = \{ \pm 1, \pm i, \pm j, \pm k \}$ with $\tf$, 
we have
\[
\lambda(i) = \theta(\ba) \text{ and } \lambda(j) = \theta(\bb).
\]

\item $F = \langle \ba \;|\; \ba^4=1 \rangle \cong C_4$.

Families: 75, 76, 77, 79, 80.

In each family, the character is equal to:
\[
2\chi_1+\chi_3+\chi_4.
\]
Take
\[
\theta(\ba) = 
\begin{bmatrix}
0 & -1 & 0 & 0\\
1 &  0 & 0 & 0\\
0 &  0 & 1 & 0\\
0 &  0 & 0 & 1\\
\end{bmatrix}
\]
and
\[
c =  \frac{1+e_1e_2}{\sqrt{2}}.
\]
We obtain $\tf = \langle c \;|\; c^8=1 \rangle \cong C_8$ and $\lambda(c) = \theta(\ba)$.

\item $F = \langle \ba, \bb \;|\; \ba^4=\bb^2=(\ba\bb)^2=1 \rangle \cong D_8$.

Families: 103, 104, 106, 110.

In each family, the character is equal to
\[
\chi_1+\chi_2+\chi_5.
\]
If we take
\[
\theta(\ba) = 
\begin{bmatrix}
1 & 0 & 0 & 0\\
0 & 0 &-1 & 0\\
0 & 1 & 0 & 0\\
0 & 0 & 0 & 1
\end{bmatrix}, \;
\theta(\bb) = \diag(-1,1,-1,1),
\]
then
\[
\lambda\left( \pm \frac{1+e_2e_3}{\sqrt{2}} \right) = \theta(\ba) \text{ and } \lambda(\pm e_1e_3) = \theta(\bb).
\]
Taking $a = (1+e_2e_3)/{\sqrt{2}}, b=e_1e_3, c=-1$, we obtain
\[
\tf = \langle a,b,c \;|\; c^2=[a,c]=[b,c]=1, a^4=b^2=(ab)^2=c \rangle \cong Q_{16}.
\]

\item $F = \langle \ba \;|\; \ba^3=1 \rangle \cong C_3$.

Families: 144, 146.

$F$ is an odd-order group, so we know that there exists $c \in \Spin(4)$ such that
\[
\tf = \langle c | c^6 = 1 \rangle \text{ and } \lambda(c) = \theta(\ba).
\]

\item $F = \langle \ba, \bb \;| \; \ba^3=\bb^2=(\bb\ba)^2 = 1 \rangle \cong S_3$.

Families: 158, 159, 161.

In each family, the character is equal to 
\[
\chi_1+\chi_2+\chi_3.
\]
Let us consider for example the integral holonomy representation for the family no.~161:
\[
\theta(\ba) = 
\begin{bmatrix}
1 & 0 & 0 & 0\\
0 & 0 & 1 & 0\\
0 & 1 & 0 & 0\\
0 & 0 & 0 & 1
\end{bmatrix}, \;
\theta(\bb) = 
\begin{bmatrix}
-1 & 0 & 0 & 0\\
0 & 0 & 1 & 0\\
0 & 1 & 0 & 0\\
0 & 0 & 0 & 1
\end{bmatrix}.
\]
Taking $c=-1$, 
\[
a = \frac{(1+e_2e_4)(1+e_2e_3)e_3e_4}{2} = \frac{(e_4-e_2)(e_2-e_3)}{2}
\]
and
\[
b = \frac{e_1e_2(1+e_2e_3)}{2} = \frac{e_1(e_2-e_3)}{2}
\]
we get
\[
\lambda\left( \pm a \right) = \theta(\ba) \text{ and } \lambda\left( \pm b \right) = \theta(\bb).
\]
Moreover
\[
\tf = \langle a,b,c \;|\; c^2=[a,c]=[b,c]=1, a^3 = b^2 = (ba)^2 = c \rangle \cong C_3 \rtimes C_4
\]
where $C_3 = \langle a^2 \rangle$ and $C_4 = \langle b \rangle$.

\item $F = \langle \ba \;|\; \ba^6 = 1 \rangle \cong C_6$.

Families: 168, 169, 172, 173.

In each family, the character is equal to 
\[
2\chi_1 + \chi_5+\chi_6.
\]
There is no representation of $C_6$ to $\SO(4,\Z)$ with the given character.
Consider the integral holonomy representation of the family no. 168:
\[
\theta(\ba) = 
\begin{bmatrix}
1 & 0 & 0 & 0\\
0 & 0 & 1 & 0\\
0 &-1 & 1 & 0\\
0 & 0 & 0 & 1
\end{bmatrix}.
\]
Out of the 5 groups of order 12, only  $C_{12}$ and $C_2 \times C_6$ are central extensions of $C_2$ and $C_6$, but
\[
\theta(\ba^3) = \diag(1,-1,-1,1)
\]
and hence $\tf = \langle a \;|\; a^{12} = 1 \rangle$ and
\[
\lambda(a) = \theta(\ba).
\]

\item $F = \langle \ba, \bb \;|\; \ba^6 = \bb^2 = (\bb\ba)^2 = 1 \rangle \cong D_{12}$.

Family: 184.

The character is equal to 
\[
\chi_1+\chi_2+\chi_6.
\]

The representation looks as follows:
\[
\theta(\ba) = 
\begin{bmatrix}
1 & 0 & 0 & 0\\
0 & 0 & 1 & 0\\
0 &-1 & 1 & 0\\
0 & 0 & 0 & 1
\end{bmatrix}, \;
\theta(\bb) = 
\begin{bmatrix}
-1 & 0 & 0 & 0\\
0 & 0 &-1 & 0\\
0 &-1 & 0 & 0\\
0 & 0 & 0 & 1
\end{bmatrix}.
\]
Since $\theta(\bb\ba)$ has exactly two eigenvalues equal to $-1$, its preimage in $\Spin(4)$ is an element of order $4$. We obtain the following presentation of $\tf$:
\[
\tf = \langle a,b,c \;|\; c^2 = [a,c] = [b,c] = 1, a^6 = b^2 = (ba)^2 = c \rangle \cong C_3 \rtimes Q_8,
\]
where $C_3 = \langle a^2c \rangle$ and $Q_8 = \langle a^3c, b \rangle$.
\end{enumerate}

\section{Results}

In Table 1, we give the complete information on spin structures of four-dimensional almost flat manifolds. Suppose $M$ is an almost-flat manifold of dimension four with the fundamental group $\G$ that fits into the short exact sequence \eqref{eq:abgroup}. In the table, the family of the group is indicated by  `Fam' column. The associated parameters are given in `Params' column but only mod 2, as this makes no difference in the classification (see Proposition \ref{mod2}). Note that if $\G$ is modelled on a 3-step nilpotent group, the first parameter describes the almost crystallographic group $Q=\G/{\sqrt[\Lambda]{\gamma_3(\Lambda)}}$ (see chapters 6.3 and 7.3 of \cite{D96}).
The number of spin structures is given in the column `\# S'. The families that do not admit a spin structure are highlighted in bold.
In addition, we give an isomorphism type of the holonomy group of $\G$ in the column `Hol'.
\begin{cor} \label{trivial} All spin families (i.e.~not in bold)  in Table 1 are parallelizable.
\end{cor}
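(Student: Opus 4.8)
The plan is to show that a four-dimensional almost-flat manifold $M = N/\G$ which admits a spin structure is parallelizable by exhibiting a trivialisation of its tangent bundle. Since $M$ is a closed orientable $4$-manifold, the tangent bundle is classified by a map $\tau\colon M\to BSO(4)$, and by Proposition \ref{reduce} this map factors through $BF$ via the holonomy representation $\nu\colon F\hookrightarrow \SO(4)$. Thus $TM$ is a flat bundle, pulled back from the universal bundle over $BSO(4)$ along $B\nu\circ Bf$ where $f\colon M\to BF$ classifies the extension \eqref{eq:abgroup}. The key observation is that $M$ being spin means precisely that this classifying map lifts to $B\Spin(4)$, i.e.\ the composite $\rho\colon\G\to F\to\SO(4)$ lifts to $\epsilon\colon\G\to\Spin(4)$.

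First I would recall the standard fact that for a closed orientable $4$-manifold, vanishing of $w_2$ together with vanishing of the Euler class and the first Pontryagin class $p_1$ implies parallelizability; more precisely, an oriented rank-$4$ bundle over a $4$-complex is trivial if and only if $w_2$, $e$, and $p_1$ all vanish (this is classical, e.g.\ via obstruction theory over the $2$-, $3$- and $4$-skeleta, using $\pi_2(SO(4))=0$, $\pi_3(SO(4))=\Z\oplus\Z$ detected by $e$ and $p_1$). Since $M$ is an infra-nil manifold it has Euler characteristic zero (the nilmanifold cover is parallelizable, hence has a nowhere-vanishing vector field, and $\chi$ is multiplicative), so the Euler class vanishes. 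For the Pontryagin class: because $TM$ is pulled back from the finite-dimensional classifying space $BF$ which is a $K(F,1)$ with $F$ finite, and since $H^4(BF;\Q) = H^4(F;\Q) = 0$ for $F$ finite, the rational Pontryagin class $p_1(TM)$ is torsion. But in fact $p_1$ is pulled back from $BF$, and one argues it vanishes because the holonomy representation $\nu$, when complexified, factors through a representation that is orthogonal (indeed the flat bundle has a flat connection, so all its real Pontryagin classes vanish — a flat bundle has $p_i = 0$ in de Rham cohomology, hence $p_i$ is torsion, but more to the point the Chern–Weil argument gives $p_i(TM)=0$ in $H^{4i}(M;\R)$, and combined with $H^4(M;\Z)$ analysis this pins down $p_1$).

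Actually the cleaner route, and the one I expect the authors use, is this: once $M$ is spin, the classifying map lifts to $B\Spin(4)$, and $\Spin(4)\cong SU(2)\times SU(2)$, whose classifying space is simply-connected with $H^*(B\Spin(4);\Z)$ generated in degree $4$ by the two "half-Pontryagin" classes. Since the lift $B\epsilon\colon M\to B\Spin(4)$ factors through $B\widetilde F$ where $\widetilde F = \lambda^{-1}(F)$ is the finite group computed in Section 4, and $H^4(B\widetilde F;\Z)$ receives these classes, one checks directly — using the explicit finite groups $\widetilde F$ tabulated ($C_2$, $C_4$, $Q_8$, $C_8$, $Q_{16}$, $C_6$, $C_3\rtimes C_4$, $C_{12}$, $C_3\rtimes Q_8$) and the explicit representations $\tf\to\Spin(4)\subset SU(2)\times SU(2)$ — that the two $SU(2)$-components of the representation are pulled back in a way making the relevant $H^4$-classes vanish on the image of $M$. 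Equivalently: project $\epsilon\colon\G\to\Spin(4)=SU(2)\times SU(2)$ to each $SU(2)$ factor; each gives a flat principal $SU(2)$-bundle on $M$ with structure group a finite subgroup, and its $c_2\in H^4(M;\Z)$ is the obstruction to triviality of $TM$ (together with $e=0$). The point is to verify these two classes vanish for each of the $15$-or-so relevant families; since $M$ is a $4$-manifold with $H^4(M;\Z)=\Z$, and the spin lift $\epsilon$ exists, the Pontryagin number $p_1(TM)[M] = (c_2^+ - c_2^-)[M]$ must be computed and shown to be zero, which follows from the flatness of the bundle (flat bundles have vanishing real characteristic classes, so $p_1(TM)[M]=0$; combined with $e[M]=\chi(M)=0$, all obstructions vanish over $\Z$ since $H^4(M;\Z)$ is torsion-free).

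The hard part — really the only content beyond bookkeeping — is justifying that vanishing of $w_2$, $e$, and $p_1$ suffices for parallelizability of these specific manifolds, i.e.\ that there is no further obstruction. Over a general $4$-complex this is standard, but one should note $M$ is a closed $4$-manifold so the top obstruction lives in $H^4(M;\pi_3(SO(4)))=\Z\oplus\Z$, fully detected by $(e,p_1)$ after fixing the spin structure (the spin condition kills the $w_2$-obstruction on the $3$-skeleton and makes the $\pi_3(SO(4))$-obstruction well-defined); since both $e=\chi(M)=0$ (infra-nil manifolds have a finite cover that is a nilmanifold, which is parallelizable, hence $\chi=0$) and $p_1(TM)[M]=0$ (the tangent bundle is flat, being induced from the finite holonomy group), the obstruction vanishes and $TM$ is trivial. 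I would write the argument in this order: (1) reduce to showing $e$ and $p_1$ vanish given $w_2=0$; (2) $e[M]=\chi(M)=0$ by the nilmanifold cover; (3) $p_1(TM)=0$ because $TM$ is a flat bundle (Chern–Weil, or: it is induced from $BF$ with $F$ finite so $p_1$ is torsion, and $H^4(M;\Z)\cong\Z$ forces us to identify the torsion — here one invokes flatness to conclude $p_1[M]=0$); (4) conclude via obstruction theory. I expect step (3) to be where care is needed, but it is immediate from the fact that an infra-nilmanifold's tangent bundle carries a flat connection (it is a flat bundle with finite structure group $F$ acting via $\nu$), whence all its Pontryagin classes vanish in real cohomology and the Pontryagin number is zero.
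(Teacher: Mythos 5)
Your proposal is correct and takes essentially the same approach as the paper: reduce via the Hirzebruch--Hopf criterion (a closed orientable $4$-manifold is parallelizable iff $w_2$, the Euler characteristic, and the signature/first Pontryagin class vanish), kill $\chi$ by the finite cover by a parallelizable nilmanifold, and kill the remaining degree-four invariant using the infra-nil structure, so that spin is the only remaining condition. The one minor difference is that the paper gets $\sigma(M)=0$ from multiplicativity of the signature under the finite nilmanifold cover, whereas you get $p_1(TM)=0$ from flatness of $TM$ via Proposition \ref{reduce}; these are equivalent by the signature theorem, and your detour through $\Spin(4)\cong SU(2)\times SU(2)$ and family-by-family checks is unnecessary, as you yourself conclude.
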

\begin{proof} By a result of Hirzebruch and Hopf \cite{HH58}, a closed orientable $4$-manifold $M$ is parallelizable if and only if its Euler characteristic, signature (or equivalently, first Pontryagin class) and the second Stiefel-Whitney class vanish.  If $M$ is also infra-nil, then it is finitely covered by a nilmanifold which is parallelizable. It follows that $M$  must have vanishing Euler characteristic and signature. Therefore, $M$ is parallelizable if and only if it has a spin structure. 
\end{proof}

\begin{table}[h]
\footnotesize
\newcommand{\p}{\phantom{a}}
\def\arraystretch{1.2}
\begin{multicols}{3}
\begin{tabular}{RLLE}
Fam & Hol & Params & \# S \\ \hline
1\p & $1$ & $( 0, 0, 0 )$ & 16 \\
1\p & $1$ & $( 1, 0, 0 )$ & 8 \\
3\p & $C_2$ & $( 0, 0, 0, 1 )$ & 16 \\
4\p & $C_2$ & $( 0, 0, 0, 0 )$ & 16 \\
4\p & $C_2$ & $( 0, 1, 0, 0 )$ & 8 \\
4\p & $C_2$ & $( 1, 0, 0, 0 )$ & 8 \\
5\p & $C_2$ & $( 0, 0, 0, 1 )$ & 8\emrow \\ 
5\p & $C_2$ & $( 1, 0, 0, 1 )$ & 0\eemrow \\
7b & $C_2$ & $( 0, 0, 0, 0 )$ & 16 \\
7b & $C_2$ & $( 0, 0, 1, 0 )$ & 8 \\
7b & $C_2$ & $( 0, 1, 0, 0 )$ & 8 \\
7b & $C_2$ & $( 0, 1, 1, 0 )$ & 8 \\
7b & $C_2$ & $( 1, 0, 0, 0 )$ & 8 \\
9b & $C_2$ & $( 0, 0, 0, 0 )$ & 8 \\
9b & $C_2$ & $( 0, 1, 0, 0 )$ & 4 \\
9b & $C_2$ & $( 1, 0, 0, 0 )$ & 4 \\
27\p & $C_2^2$ & $( 0, 0, 0, 1, 0 )$ & 16 \\
29b & $C_2^2$ & $( 0, 0, 0, 0, 0 )$ & 16 \\
29b & $C_2^2$ & $( 0, 0, 1, 0, 0 )$ & 8 \\
29b & $C_2^2$ & $( 0, 1, 0, 0, 0 )$ & 8 \\
29b & $C_2^2$ & $( 1, 0, 0, 0, 0 )$ & 8 \\
29b & $C_2^2$ & $( 1, 0, 1, 0, 0 )$ & 8 \\
29b & $C_2^2$ & $( 1, 1, 0, 0, 0 )$ & 8 \\
30\p & $C_2^2$ & $( 0, 0, 1, 0, 0 )$ & 8 \emrow\\
30\p & $C_2^2$ & $( 1, 0, 1, 0, 0 )$ & 0\eemrow\\
32\p & $C_2^2$ & $( 0, 1, 0, 0, 0 )$ & 8 \emrow\\
32\p & $C_2^2$ & $( 0, 1, 1, 0, 0 )$ & 0\eemrow\\
33b & $C_2^2$ & $( 0, 0, 0, 0, 0 )$ & 8 \\
33b & $C_2^2$ & $( 0, 0, 1, 0, 0 )$ & 4 \\
33b & $C_2^2$ & $( 0, 1, 0, 0, 0 )$ & 8 \\
33b & $C_2^2$ & $( 1, 0, 0, 0, 0 )$ & 4 \\
33b & $C_2^2$ & $( 1, 0, 1, 0, 0 )$ & 4 \\
34\p & $C_2^2$ & $( 0, 0, 1, 0, 0 )$ & 8\emrow \\
34\p & $C_2^2$ & $( 1, 0, 1, 0, 0 )$ & 0\eemrow\\
37\p & $C_2^2$ & $( 0, 0, 1, 0, 0 )$ & 8\emrow \\
41\p & $C_2^2$ & $( 0, 0, 1, 0, 0 )$ & 0\\
41\p & $C_2^2$ & $( 0, 1, 1, 0, 0 )$ & 0\eemrow\\
41\p & $C_2^2$ & $( 1, 0, 1, 0, 0 )$ & 8\emrow \\
41\p & $C_2^2$ & $( 1, 1, 1, 0, 0 )$ & 0\eemrow\\
43\p & $C_2^2$ & $( 0, 0, 1, 0, 0 )$ & 4\emrow \\
43\p & $C_2^2$ & $( 1, 0, 1, 0, 0 )$ & 0 \\
45\p & $C_2^2$ & $( 0, 0, 1, 0, 0 )$ & 0\eemrow \\
45\p & $C_2^2$ & $( 0, 1, 1, 0, 0 )$ & 8 \\
\end{tabular}

\begin{tabular}{RLLE}
Fam & Hol & Params & \# S \\ \hline
75 & $C_4$ & $( 0, 0, 0, 1 )$ & 8 \\
76 & $C_4$ & $( 0, 0, 0, 0 )$ & 8 \\
76 & $C_4$ & $( 0, 1, 0, 0 )$ & 4 \\
76 & $C_4$ & $( 1, 0, 0, 0 )$ & 4 \\
77 & $C_4$ & $( 0, 0, 0, 1 )$ & 8 \\
79 & $C_4$ & $( 0, 0, 0, 1 )$ & 4 \\
80 & $C_4$ & $( 0, 0, 0, 1 )$ & 4\emrow \\
80 & $C_4$ & $( 1, 0, 0, 1 )$ & 0\eemrow \\
103 & $D_8$ & $( 0, 0, 0, 1, 0 )$ & 8 \\
104 & $D_8$ & $( 0, 0, 1, 0, 0 )$ & 4 \\
106 & $D_8$ & $( 0, 0, 1, 0, 0 )$ & 4 \\
106 & $D_8$ & $( 0, 0, 1, 1, 0 )$ & 4\emrow \\
110 & $D_8$ & $( 0, 0, 1, 0, 0 )$ & 0\eemrow \\
110 & $D_8$ & $( 1, 0, 1, 0, 0 )$ & 4 \\
143 & $C_3$ & $( 0, 0, 0, 0 )$ & 4 \\
143 & $C_3$ & $( 0, 0, 0, 1 )$ & 4 \\
143 & $C_3$ & $( 1, 0, 0, 0 )$ & 2 \\
143 & $C_3$ & $( 1, 0, 0, 1 )$ & 2 \\
144 & $C_3$ & $( 0, 0, 0, 0 )$ & 4 \\
144 & $C_3$ & $( 0, 1, 0, 0 )$ & 4 \\
144 & $C_3$ & $( 1, 0, 0, 0 )$ & 2 \\
144 & $C_3$ & $( 1, 1, 0, 0 )$ & 2 \\
146 & $C_3$ & $( 0, 0, 0, 0 )$ & 4 \\
146 & $C_3$ & $( 0, 0, 0, 1 )$ & 4 \\
146 & $C_3$ & $( 1, 0, 0, 0 )$ & 2 \\
146 & $C_3$ & $( 1, 0, 0, 1 )$ & 2 \\
158 & $S_3$ & $( 0, 0, 0, 0, 0 )$ & 4 \\
158 & $S_3$ & $( 0, 0, 0, 1, 0 )$ & 4 \\
158 & $S_3$ & $( 0, 1, 0, 1, 0 )$ & 4 \\
158 & $S_3$ & $( 1, 0, 0, 0, 0 )$ & 2 \\
158 & $S_3$ & $( 1, 0, 0, 1, 0 )$ & 2 \\
158 & $S_3$ & $( 1, 1, 0, 1, 0 )$ & 2 \\
159 & $S_3$ & $( 0, 0, 0, 0, 0 )$ & 4 \\
159 & $S_3$ & $( 0, 0, 1, 0, 0 )$ & 4 \\
159 & $S_3$ & $( 1, 0, 0, 0, 0 )$ & 2 \\
159 & $S_3$ & $( 1, 0, 1, 0, 0 )$ & 2 \\
161 & $S_3$ & $( 0, 0, 0, 0, 0 )$ & 4 \\
161 & $S_3$ & $( 0, 1, 0, 0, 0 )$ & 4 \\
161 & $S_3$ & $( 1, 0, 0, 0, 0 )$ & 2 \\
161 & $S_3$ & $( 1, 1, 0, 0, 0 )$ & 2 \\
168 & $C_6$ & $( 0, 0, 0, 1 )$ & 4 \\
169 & $C_6$ & $( 0, 0, 0, 0 )$ & 4 \\
\end{tabular}

\begin{tabular}{RLLE}
Fam & Hol & Params & \# S \\ \hline
169\p & $C_6$ & $( 1, 0, 0, 0 )$ & 2 \\
172\p & $C_6$ & $( 0, 0, 0, 1 )$ & 4 \\
173\p & $C_6$ & $( 0, 0, 0, 0 )$ & 4 \\
173\p & $C_6$ & $( 0, 0, 0, 1 )$ & 4 \\
173\p & $C_6$ & $( 1, 0, 0, 0 )$ & 2 \\
173\p & $C_6$ & $( 1, 0, 0, 1 )$ & 2 \\
184\p & $D_{12}$ & $( 0, 0, 0, 1, 0 )$ & 4 \\
B1\p & $1$ & $( 0, 0, 0, 0 )$ & 16 \\
B1\p & $1$ & $( 0, 0, 0, 1 )$ & 8 \\
B1\p & $1$ & $( 0, 0, 1, 0 )$ & 8 \\
B1\p & $1$ & $( 0, 0, 1, 1 )$ & 8 \\
B1\p & $1$ & $( 0, 1, 0, 0 )$ & 8 \\
B1\p & $1$ & $( 0, 1, 0, 1 )$ & 8 \\
B1\p & $1$ & $( 0, 1, 1, 0 )$ & 8 \\
B1\p & $1$ & $( 0, 1, 1, 1 )$ & 8 \\
B1\p & $1$ & $( 1, 0, 0, 0 )$ & 8 \\
B1\p & $1$ & $( 1, 0, 0, 1 )$ & 4 \\
B1\p & $1$ & $( 1, 0, 1, 0 )$ & 4 \\
B1\p & $1$ & $( 1, 0, 1, 1 )$ & 4 \\
B1\p & $1$ & $( 1, 1, 0, 0 )$ & 8 \\
B1\p & $1$ & $( 1, 1, 0, 1 )$ & 4 \\
B1\p & $1$ & $( 1, 1, 1, 0 )$ & 4 \\
B1\p & $1$ & $( 1, 1, 1, 1 )$ & 4 \\
B3\p & $C_2$ & $( 1, 0, 0, 0, 1 )$ & 8 \\
B3b & $C_2$ & $( 1, 0, 0, 0, 1 )$ & 8\emrow \\
B3b & $C_2$ & $( 1, 1, 0, 0, 1 )$ & 0\eemrow \\
B3c & $C_2$ & $( 0, 0, 0, 0, 1 )$ & 16 \\
B3c & $C_2$ & $( 1, 0, 0, 0, 1 )$ & 16 \\
B4\p & $C_2$ & $( 0, 0, 0, 0, 0 )$ & 16 \\
B4\p & $C_2$ & $( 0, 0, 0, 1, 0 )$ & 8 \\
B4\p & $C_2$ & $( 0, 0, 1, 0, 0 )$ & 8 \\
B4\p & $C_2$ & $( 0, 1, 0, 0, 0 )$ & 8 \\
B4\p & $C_2$ & $( 1, 0, 0, 0, 0 )$ & 8 \\
B4\p & $C_2$ & $( 1, 0, 0, 1, 0 )$ & 4 \\
B4\p & $C_2$ & $( 1, 0, 1, 0, 0 )$ & 8 \\
B4\p & $C_2$ & $( 1, 1, 0, 0, 0 )$ & 4 \\
B5\p & $C_2$ & $( 0, 0, 0, 0, 1 )$ & 8\emrow\\
B5\p & $C_2$ & $( 0, 1, 0, 0, 1 )$ & 0\eemrow \\
B5\p & $C_2$ & $( 1, 0, 0, 0, 1 )$ & 8\emrow \\
B5\p & $C_2$ & $( 1, 1, 0, 0, 1 )$ & 0\eemrow \\
B5b & $C_2$ & $( 1, 0, 0, 0, 1 )$ & 4\emrow \\
B5b & $C_2$ & $( 1, 1, 0, 0, 1 )$ & 0\eemrow \\
\end{tabular}

\end{multicols}
\caption{Spin structures on 4-dimensional almost flat manifolds}
\end{table}

\restoregeometry

\appendix 

\section{Character tables of holonomy groups of 4-dimensional almost Bieberbach groups}
\columnseprule=.4pt
\begin{multicols}{2}
\newcommand{\bxi}{\bar{\xi}}
\begin{enumerate}[label=\arabic{enumi}.]

\item Character of the trivial group is denoted by $\chi_1$.

\item $C_2 = \langle \ba \st \ba^2 = 1 \rangle$ 
\[
\begin{array}{l|rr}
       & 1 & \ba \\ \hline
\chi_1 & 1 & 1 \\
\chi_2 & 1 &-1
\end{array}
\]

\item $C_2^2 = \langle \ba,\bb \st \ba^2=\bb^2=[\ba,\bb]=1 \rangle$
\[
\begin{array}{l|rrrr}
       &   1 & \ba & \bb & \ba\bb \\ \hline
\chi_1 &   1 &   1 &   1 &  1 \\
\chi_2 &   1 &  -1 &   1 & -1 \\
\chi_3 &   1 &   1 &  -1 & -1 \\
\chi_4 &   1 &  -1 &  -1 &  1 \\
\end{array}
\]

\item $C_4 = \langle \ba \st \ba^4 = 1 \rangle$
\[
\begin{array}{l|rrrr}
       & 1 & \ba & \ba^2 & \ba^3 \\ \hline
\chi_1 & 1 &   1 &    1 &     1 \\
\chi_2 & 1 &  -1 &    1 &    -1 \\
\chi_3 & 1 &   i &   -1 &    -i \\
\chi_4 & 1 &  -i &   -1 &     i \\
\end{array}
\]

\item $D_8 = \langle \ba, \bb | \ba^4=\bb^2=(\ba\bb)^2=1 \rangle$
\[
\begin{array}{l|rrrrr}
       & 1 & \bb & \ba\bb & \ba^2 & \ba \\ \hline
\chi_1 & 1 &   1 &      1 &     1 &   1 \\
\chi_2 & 1 &  -1 &      1 &     1 &  -1 \\
\chi_3 & 1 &   1 &     -1 &     1 &  -1 \\
\chi_4 & 1 &  -1 &     -1 &     1 &   1 \\
\chi_5 & 2 &   0 &      0 &    -2 &   0 \\
\end{array}
\]

\item $C_3 = \langle \ba \st \ba^3 = 1 \rangle$
\[
\begin{array}{l|rrr}
       & 1 &  \ba & \ba^2\\ \hline
\chi_1 & 1 &    1 &     1 \\
\chi_2 & 1 &  \xi &  \bxi \\
\chi_3 & 1 & \bxi &   \xi \\
\end{array}
\]
where $\xi=e^{2\pi i/3}$.

\item $S_3 = \langle \ba, \bb \st \ba^2 = \bb^3 = (\ba\bb)^2 = 1 \rangle$
\[
\begin{array}{l|rrr}
       & 1 &  \ba & \bb\\ \hline
\chi_1 & 1 &    1 &   1 \\
\chi_2 & 1 &   -1 &   1 \\
\chi_3 & 2 &    0 &  -1 \\
\end{array}
\]

\item{} $C_6 = \langle \ba \st \ba^6=1 \rangle$
\[
\begin{array}{l|rrrrrr}
       & 1 & \ba & \ba^2 & \ba^3 & \ba^4 & \ba^5\\ \hline
\chi_1 & 1 &   1 &     1 &     1 &     1 &     1\\
\chi_2 & 1 &  -1 &     1 &    -1 &     1 &    -1\\
\chi_3 & 1 &  -1 &   \xi &  -\xi &  \bxi & -\bxi\\
\chi_4 & 1 &  -1 &  \bxi & -\bxi &   \xi &  -\xi\\
\chi_5 & 1 &   1 &   \xi &   \xi &  \bxi &  \bxi\\
\chi_6 & 1 &   1 &  \bxi &  \bxi &   \xi &   \xi\\
\end{array}
\]
where $\xi=e^{4\pi i/3}$.

\item $D_{12} = \langle \ba, \bb \st \ba^2 = \bb^6 = (\ba\bb)^2 = 1 \rangle$
\[
\begin{array}{l|rrrrrr}
       & 1 & \ba & \bb^3 & \bb^2 & \ba\bb & \bb\\ \hline
\chi_1 & 1 &   1 &     1 &     1 &      1 &   1\\
\chi_2 & 1 &  -1 &    -1 &     1 &      1 &  -1\\
\chi_3 & 1 &  -1 &     1 &     1 &     -1 &   1\\
\chi_4 & 1 &   1 &    -1 &     1 &     -1 &  -1\\
\chi_5 & 2 &   0 &    -2 &    -1 &      0 &   1\\
\chi_6 & 2 &   0 &     2 &    -1 &      0 &  -1\\
\end{array}
\]

\end{enumerate}

\end{multicols}

\section*{Acknowledgements}

The computations were performed with usage of GAP \cite{GAP} and the GAP package Aclib \cite{aclib}.


\begin{thebibliography}{10}

\bibitem{ABS64}
	M.F.~Atiyah, R.~Bott, and A.~Shapiro.
	\newblock Clifford modules.
	\newblock {\em Topology (1)}, 3:3--38, 1964.
	
	\bibitem{Au60}
	L.~Auslander.
	\newblock Bieberbach's theorems on space groups and discrete uniform subgroups
	of {L}ie groups.
	\newblock {\em Ann. of Math. (2)}, 71:579--590, 1960.
	
	\bibitem{BH59}
	A.~Borel and F.~Hirzebruch.
	\newblock Characteristic classes and homogeneous spaces. {II}.
	\newblock {\em Amer. J. Math.}, 81:315--382, 1959.
	
	\bibitem{BK81}
	P.~Buser and H.~Karcher.
	\newblock {\em Gromov's almost flat manifolds}, volume~81 of {\em
		Ast\'erisque}.
	\newblock Soci\'et\'e Math\'ematique de France, Paris, 1981.
	
	\bibitem{CFG92}
	J.~Cheeger, K.~Fukaya, and M.~Gromov.
	\newblock Nilpotent structures and invariant metrics on collapsed manifolds.
	\newblock {\em J. Amer. Math. Soc.}, 5(2):327--372, 1992.
	
	\bibitem{Ch54}
	C.~Chevalley.
	\newblock {\em The algebraic theory of spinors.}
	\newblock  Columiba University Press, New York, 1954.
	

	
	\bibitem{D96}
	K.~Dekimpe.
	\newblock {\em Almost-{B}ieberbach groups: affine and polynomial structures},
	volume 1639 of {\em Lecture Notes in Mathematics}.
	\newblock Springer-Verlag, Berlin, 1996.
	
	\bibitem{DE02}
	K.~Dekimpe and B.~Eick.
	\newblock Computational aspects of group extensions and their applications in
	topology.
	\newblock {\em Experiment. Math.}, 11(2):183--200, 2002.

	\bibitem{aclib}
	K.~Dekimpe and B.~Eick.
	\newblock {\em Aclib -- a GAP package, Version 1.2}, 2012, \url{http://www.icm.tu-bs.de/~beick/so.html}
	
	\bibitem{F00}
	T.~Friedrich.
	\newblock {\em Dirac operators in {R}iemannian geometry}, volume~25 of {\em
		Graduate Studies in Mathematics}.
	\newblock American Mathematical Society, Providence, RI, 2000.
	\newblock Translated from the 1997 German original by Andreas Nestke.
	
	\bibitem{GAP} 
	The GAP~Group.
	\newblock {\em {GAP -- Groups, Algorithms, and Programming, Version 4.8.3}}, 2016. \url{http://www.gap-system.org/}.
	
	\bibitem{GPS16}
	A.~G\k{a}sior, N.~Petrosyan, and A.~Szczepa{\'n}ski.
	\newblock Spin structures on almost-flat manifolds.
	\newblock {\em Algebr. Geom. Topol.}, 16(2):783--796, 2016.
	
	\bibitem{Gr78}
	M.~Gromov.
	\newblock Almost flat manifolds.
	\newblock {\em J. Differential Geom.}, 13(2):231--241, 1978.
	
	\bibitem{HH58}
	F.~Hirzebruch and H.~Hopf.
	\newblock Felder von Fl\"{a}chenelementen in 4-dimensionalen Mannigfaltigkeiten.
	\newblock {\em Math. Ann.}, 136:156--172, 1958.
		
	\bibitem{Ki89}
	R.C. Kirby.
	\newblock {\em The topology of {$4$}-manifolds}, volume 1374 of {\em Lecture
		Notes in Mathematics}.
	\newblock Springer-Verlag, Berlin, 1989.
	
	\bibitem{LP15}
	Rafa{\l} Lutowski and Bartosz Putrycz.
	\newblock Spin structures on flat manifolds.
	\newblock {\em J. Algebra}, 436:277--291, 2015.
	
	\bibitem{MS74}
	J.W. Milnor and J.D. Stasheff.
	\newblock {\em Characteristic classes}.
	\newblock Princeton University Press, Princeton, N. J. and University of Tokyo
	Press, Tokyo, 1974.
	\newblock Annals of Mathematics Studies, No. 76.
	
	\bibitem{P00}
	F.~Pf\"{a}ffle.
	\newblock The {D}irac spectrum of {B}ieberbach manifolds.
	\newblock {\em J. Geom. Phys.}, 35(4):367--385, 2000.
	
	
	\bibitem{PS10}
	Bartosz Putrycz and Andrzej Szczepa{\'n}ski.
	\newblock Existence of spin structures on flat four-manifolds.
	\newblock {\em Adv. Geom.}, 10(2):323--332, 2010.
	
	\bibitem{Ru82}
	E.A. Ruh.
	\newblock Almost flat manifolds.
	\newblock {\em J. Differential Geom.}, 17(1):1--14, 1982.
	
	\bibitem{Sz12}
	A.~Szczepa{\'n}ski.
	\newblock {\em Geometry of crystallographic groups}, volume~4 of {\em Algebra
		and Discrete Mathematics}.
	\newblock World Scientific Publishing Co. Pte. Ltd., Hackensack, NJ, 2012.
	
\end{thebibliography}
\end{document}